\newtheorem{proposition}{Proposition}[section] 
\newtheorem{lemma}[proposition]{Lemma}
\newtheorem{corollary}[proposition]{Corollary}
\newtheorem{theorem}[proposition]{Theorem}
\theoremstyle{definition}
\newtheorem{Definition}[proposition]{Definition}
\newtheorem{example}[proposition]{Example}
\theoremstyle{remark}
\newcommand{\thlabel}[1]{\label{th:#1}}
\newcommand{\thref}[1]{Theorem~\ref{th:#1}}
\newcommand{\lelabel}[1]{\label{le:#1}}
\newcommand{\leref}[1]{Lemma~\ref{le:#1}}
\newcommand{\colabel}[1]{\label{co:#1}}
\newcommand{\coref}[1]{Corollary~\ref{co:#1}}
\newcommand{\exlabel}[1]{\label{ex:#1}}
\newcommand{\exref}[1]{Example~\ref{ex:#1}}
\newcommand{\eqlabel}[1]{\label{eq:#1}}
\newcommand{\equref}[1]{(\ref{eq:#1})}
\newcommand{\Hom}{{\sf Hom}}
\newcommand{\End}{{\sf End}}
\newcommand{\Aut}{{\sf Aut}\,}
\newcommand{\Char}{{\sf Char}\,}
\newcommand{\Mod}{{\sf Mod}}
\def\ot{\otimes}
\def\lie{{[-,-]}}
\def\dual{{<-,->}}
\def\MM{{\mathbb M}}
\def\ZZ{{\mathbb Z}}
\newcommand{\Cc}{\mathcal{C}}
\newcommand{\Dd}{\mathcal{D}}
\newcommand{\Hh}{\mathcal{H}}
\newcommand{\Mm}{\mathcal{M}}
\def\text#1{{\rm {\rm #1}}}
\def\lim{{\rm lim\,}}
\def\H{{\sf H}}
\def\Vect{{\sf Vect}}
\def\En{{\sf En}}
\title{A note on the categorification of Lie algebras}
\author{Isar Goyvaerts}
\address{Department of Mathematics, Vrije Universiteit Brussel, Pleinlaan 2, B-1050 Brussel, Belgium} \email{igoyvaer@vub.ac.be}
\author{Joost Vercruysse}
\address{D\'epartement de Math\'ematiques, Universit\'e Libre de Bruxelles, Boulevard du Triomphe, B-1050 Bruxelles, Belgium}
\email{jvercruy@ulb.ac.be}
\begin{document}

\begin{abstract}
In this short note we study Lie algebras in the framework of symmetric monoidal categories. After a brief review of the existing work in this field and a presentation of earlier studied and new examples, we examine which functors preserve the structure of a Lie algebra.
\end{abstract}

\maketitle

\section{Introduction}
Lie algebras have many generalizations such as Lie superalgebras, Lie color and $(G,\chi)$-Lie algebras, braided Lie algebras, Hom-Lie algebras, Lie algebroids, etc.

Motivated by the way that the field of Hopf algebras benefited from the interaction with the field of monoidal categories (see e.g.\ \cite{Tak}) on one hand, and the strong relationship between Hopf algebras and Lie algebras on the other hand, the natural question arose whether it is possible to study Lie algebras within the framework of monoidal categories, and whether Lie theory could also benefit from this viewpoint.

First of all, it became folklore knowledge that Lie algebras can be easily defined in any symmetric monoidal $k$-linear category over a commutative ring $k$, or (almost equivalently) in any symmetric monoidal additive category. Within this setting, many (but not all) of the above cited examples can already be recovered. We will treat slightly in more detail the examples of Lie superalgebras and Hom-Lie algebras in the second section.

As some examples, in particular Lie color algebras, do not fit into this theory, several attemps were made to define Lie algebras in any {\em braided}, rather than {\em symmetric} monoidal category. A reason to do this is that $G$-graded modules over any group $G$ give rise to a monoidal category, whose center is a braided monoidal category that can be described as the category of Yetter-Drinfel'd modules over a Hopf algebra. In this way, Lie color algebras and $(G,\chi)$-Lie algebras are recovered as a special case (see \cite{P2}). A slightly different point of view is advocated by Majid, whose motivation is to describe deformations of Lie algebras, that he calls braided Lie algebras, inside a braided monoidal category, such that the universal enveloping of this deformed Lie algebra encodes the same information as the deformed (quantum) enveloping algebra of the original Lie algebra (see \cite{Majid}).

We will not discuss further these two last cited types of Lie algebras in this short note. Rather, we will study Lie algebras in a (possibly non-symmetric, possibly non-braided) monoidal category, such that the Lie algebra allows a local symmetry. That is, the Lie algebra possesses a self-invertible Yang-Baxter operator and the anti-symmetry and Jacobi identity are defined up to this Yang-Baxter operator.

\section{Lie algebras in additive monoidal categories}

Troughout, we will work in a symmetric monoidal and additive category. 
Without any change in the arguments, one can work in any $k$-linear symmetric monoidal category, where $k$ is a commutative ring with characteristic different from $2$.

\begin{Definition}
Let $\Cc=(C,\ot,I,a,l,r,c)$ be a symmetric monoidal additive category with associativity constraint $a$, left- and right unit constraints resp. $l$ and $r$ and symmetry $c$. A {\em Lie algebra} in $\Cc$ is a pair $(L,\lie)$, where $L$ is an object of $\Cc$ and $\lie:L\ot L\to L$ is a morphism in $\Cc$ that satisfies the following two conditions
\begin{eqnarray}
\lie \circ (id_{L\ot L} + c_{L,L})&=&0_{L\ot L,L},\\
\lie\circ (id_{L}\ot \lie)\circ (id_{L\ot(L\ot L)}+ t+ w)&=&0_{L\ot (L\ot L),L},
\end{eqnarray}
where $t=c_{L\ot L,L}\circ a^{-1}_{L,L,L}$ and $w=a_{L,L,L}\circ c_{L,L\ot L}$.
\end{Definition}

\begin{example}\exlabel{LieAlg} 
Let $\Mm_{R}=(\Mod(R),\ot_{R},R,a,l,r,c)$ be the abelian, symmetric monoidal category of (right) $R$-modules over a commutative ring $R$ ($\Char(R)\neq 2$) with trivial associativity and unit constraints and with symmetry $c=\tau$ (the flip). Taking a Lie algebra in $\Mm_{R}$, one obtains the classical definition of a Lie algebra over $R$.
\end{example}
\begin{example}\exlabel{SuperLie}
Let $\Cc=(\Vect^{\ZZ_{2}}(k),\ot_{k},k,a,l,r,c)$ be the abelian, symmetric monoidal category of $k$-vector spaces ($\Char(k)\neq 2$) graded by $\ZZ_{2}$. We take the trivial associativity and unit constraints. The symmetry $c$ is defined as follows: For any pair of objects $(V,W)$ in $\Cc$; $c_{V,W}:V\ot W\to W\ot V; v\ot w\mapsto (-1)^{|v||w|}w\ot v$. 
Taking a Lie algebra in $\Cc$, one recovers the definition of a Lie superalgebra (see also \cite{P2}).
\end{example}
\begin{example}
We now recall from \cite{CG}, the construction of a non-trivial example of an abelian, non-strict symmetric monoidal category (called the Hom-construction). \\
Let $\Cc$ be a category. A new category $\Hh(\Cc)$ is introduced as follows:
objects are couples $(M,\mu)$, with $M\in \Cc$ and $\mu\in \Aut_\Cc(M)$.
A morphism $f:\ (M,\mu)\to (N,\nu)$ is a morphism $f:\ M\to N$ in $\Cc$ such that
$\nu\circ f=f\circ \mu.$

Now assume that $\Cc=(\Cc,\ot,I,a,l,r,c)$ is a braided monoidal category. 
Then one easily verifies that $\Hh(\Cc)=(\Hh(\Cc),\ot,(I,I),a,l,r,c)$ is again a braided monoidal category, with the tensor product defined by the following formula 
\begin{equation}\nonumber
(M,\mu)\ot (N,\nu)=(M\ot N,\mu\ot \nu),
\end{equation}
for $(M,\mu)$ and $ (N,\nu)$ in $\Hh(\Cc)$.
On the level of morphisms, the tensor product is the tensor products of morphisms in $\Cc$.
By deforming the category $\Hh(\Cc)$, we obtain the category $\widetilde{\Hh}(\Cc)=(\Hh(\Cc),\ot,(I,I),\tilde{a},\tilde{l},\tilde{r},c)$ which is still a braided monoidal category (but no longer strict if $\Cc$ was strict).
The associativity constraint $\tilde{a}$ is given by the formula
\begin{equation}\nonumber
\tilde{a}_{M,N,P}=a_{M,N,P}\circ ((\mu\ot N)\ot \pi^{-1})
=(\mu\ot (N\ot \pi^{-1}))\circ a_{M,N,P},
\end{equation}
for $(M,\mu),(N,\nu),(P,\pi)\in \Hh(\Cc)$. The unit constraints $\tilde{l}$ and
$\tilde{r}$ are given by
\begin{equation}\nonumber
\tilde{l}_M=\mu\circ l_M=l_M\circ (I\ot \mu)~~;~~
\tilde{r}_M= \mu\circ r_M=r_M\circ (\mu\ot I).
\end{equation}
Now, A Lie algebra in $\widetilde{\Hh}(\Mm_{R})$ is a triple $(L,[-,-],\alpha)$ with $(L,\alpha)\in\widetilde{\Hh}(\Mm_{R})$, $[-,-]:\ L\ot L\to L$ a morphism
in $\widetilde{\Hh}(\Mm_R)$ (that is, $[\alpha(x)\ot\alpha(y)]=\alpha[x,y]$), satisfying anti-symmetry and the so-called Hom-Jacobi identity;
$$[\alpha(x)\ot [y\ot z]]+[\alpha(y)\ot [z\ot x]]+ [\alpha(z)\ot [x\ot y]]=0,$$
We thus recover the definition of a Hom-Lie algebra (cf.\cite{HS}), where in this case $\alpha$ is a classical Lie algebra isomorphism. 
\end{example}
\begin{example}
\item A Lie coalgebra in $\Cc$ is a Lie algebra in $\Cc^{op}$, the opposite category of $\Cc$. This means that a Lie coalgebra is a pair $(C,\dual)$, where $\dual:C\to C\ot C$ is a map that satisfies the following two conditions
\begin{eqnarray*}
(id_{C\ot C} + c_{C,C})\circ \dual&=&0;\\
(id_{C\ot(C\ot C)}+ t+ w)\circ (id_{C}\ot \dual)\circ \dual&=&0.
\end{eqnarray*}
Lie coalgebras were introduced by Michaelis \cite{Mich}.
\end{example}

Our next aim is to `free' the definition of Lie algebra of the global symmetry on our additive monoidal category. 

\begin{Definition}
Let $\Cc=(C,\ot,I,a,l,r)$ be a (possibly non-symmetric) 
monoidal category and $L$ an object in $\Cc$. A {\em self-invertible Yang-Baxter operator} on $L$ is a morphism $c:L\ot L\to L\ot L$ that satisfies the following conditions: 
\begin{eqnarray}
c\circ c&=&L\ot L; \eqlabel{zelfinvers}\\
&&\hspace{-3cm}a_{L,L,L}\circ(c\ot L)\circ a_{L,L,L}^{-1}\circ(L\ot c)\circ a_{L,L,L}\circ(c\ot L)\eqlabel{YB} \\
&=&(L\ot c)\circ a_{L,L,L}\circ (c\ot L)\circ a_{L,L,L}^{-1}\circ(L\ot c)\circ a_{L,L,L} \nonumber
\end{eqnarray}
\end{Definition}

Given an object $L$ in $\Cc$, together with a self-invertible Yang Baxter operator $c$ as above, we can construct the following morphisms in $\Cc$:
\begin{eqnarray*}
t=t_c:=
a_{L,L,L}\circ (c\ot L)\circ a^{-1}_{L,L,L}\circ (L\ot c);\\
w=w_c:=
(L\ot c)\circ a_{L,L,L}\circ (c\ot L)\circ a^{-1}_{L,L,L}.
\end{eqnarray*}
One can easily verify that $t\circ t=w$ and $t\circ w=id=w\circ t$.

\begin{example}\exlabel{symmetric}
If $\Cc$ is a symmetric monoidal category, with symmetry $c_{X,Y}:X\ot Y\to Y\ot X$, for all $X,Y\in\Cc$, then $c_{L,L}$ is a self-invertible Yang-Baxter operator for $L\in \Cc$. Obviously, $c_{L,L}$ satisfies conditions \equref{zelfinvers}; to see that $c_{L,L}$ also satisfies \equref{YB}, one applies the hexagon condition in combination with the naturality of $c$. 
Moreover, $t_{c_{L,L}}=c_{L\ot L,L}\circ a^{-1}_{L,L,L}$ and $w_{c_{L,L}}=a_{L,L,L}\circ c_{L,L\ot L}$.
\end{example}

\begin{Definition}
Let $\Cc$ be an an additive, monoidal category, but not necessarily symmetric. A YB-Lie algebra in $\Cc$ is a triple $(L,\lambda,\lie)$, where $L$ is an object of $\Cc$, $\lambda$ is a self-invertible Yang-Baxter operator for $L$ in $\Cc$, and $\lie:\ L\ot L\to L$ is a morphism in $\Cc$ that satisfies
\begin{eqnarray}\eqlabel{AS}
\lie \circ (id_{L\ot L} + \lambda)&=&0_{L\ot L,L},\\
\eqlabel{Jac}
\lie\circ (id_{L}\ot \lie)\circ (id_{L\ot(L\ot L)}+ t_\lambda+ w_\lambda)&=&0_{L\ot (L\ot L),L}.\\
(id_L\ot \lie)\circ t_\lambda\circ a_{L,L,L}&=&\lambda\circ (\lie\ot id_L)\eqlabel{compatibility}
\end{eqnarray}
We call \equref{Jac} the (right) $\lambda$-Jacobi identity for $L$. The equation \equref{compatibility} expresses the compatibility between the Lie bracket $\lie$ and the Yang-Baxter operator $\lambda$. Remark that in the case were $\lambda=c_{L,L}$ (see \exref{symmetric}), this condition is automatically satisfied by the naturality of the symmetry $c_{-,-}$.
\end{Definition}

As for usual Lie algebras, the definition of a YB-Lie algebra is left-right symmetric, as follows from the following Lemma.

\begin{lemma}
Let $(L,\lambda,\lie)$ be a YB-Lie algebra in $\Cc$. Then $L$ also satisfies the left $\lambda$-Jacobi identity, that is the following equation holds
$$\lie\circ (\lie\ot id_{L})\circ a^{-1}_{L,L,L}\circ (id_{L\ot(L\ot L)}+ t_\lambda+ w_\lambda)=0_{L\ot (L\ot L),L}. $$
\end{lemma}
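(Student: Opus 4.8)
The plan is to deduce the left $\lambda$-Jacobi identity from the right one \equref{Jac} by exploiting the antisymmetry \equref{AS} together with the compatibility \equref{compatibility}, so that no new ``left-handed'' computation is really needed. First I would compose \equref{compatibility} on the left with $\lie$ and rewrite $\lie\circ\lambda$ as $-\lie$ using \equref{AS}; this yields
\[
\lie\circ (id_L\ot \lie)\circ t_\lambda\circ a_{L,L,L}=-\,\lie\circ (\lie\ot id_L).
\]
Precomposing both sides with $a^{-1}_{L,L,L}$ gives the key identity
\[
\lie\circ (\lie\ot id_L)\circ a^{-1}_{L,L,L}=-\,\lie\circ (id_L\ot \lie)\circ t_\lambda .
\]

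Next I would substitute this into the left-hand side of the asserted left $\lambda$-Jacobi identity, turning it into
\[
-\,\lie\circ (id_L\ot \lie)\circ t_\lambda\circ (id_{L\ot(L\ot L)}+ t_\lambda+ w_\lambda).
\]
Here I use the elementary relations $t_\lambda\circ t_\lambda=w_\lambda$ and $t_\lambda\circ w_\lambda=id=w_\lambda\circ t_\lambda$ recorded just before \exref{symmetric}: they give $t_\lambda\circ (id_{L\ot(L\ot L)}+ t_\lambda+ w_\lambda)=t_\lambda+ w_\lambda+ id_{L\ot(L\ot L)}$, i.e.\ the operator $id+t_\lambda+w_\lambda$ is invariant under left composition with $t_\lambda$. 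Hence the expression above equals $-\,\lie\circ (id_L\ot \lie)\circ (id_{L\ot(L\ot L)}+ t_\lambda+ w_\lambda)$, which is $0$ by the right $\lambda$-Jacobi identity \equref{Jac}.

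I do not anticipate a genuine obstacle: the argument is purely formal once \equref{AS}, \equref{compatibility} and \equref{Jac} are in hand. The only points demanding care are the bookkeeping of the associativity constraints $a_{L,L,L}$ and $a^{-1}_{L,L,L}$ when composing the morphisms, and checking that $t_\lambda$ (the ``cyclic-type'' operator on $L\ot(L\ot L)$ built from $c=\lambda$) is inserted on the correct side in the first step — both are routine in a (possibly non-strict) monoidal category. If one prefers, the whole computation can be rendered string-diagrammatically, where the invariance of $id+t_\lambda+w_\lambda$ under $t_\lambda$ is visually evident.
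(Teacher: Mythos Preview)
Your argument is correct and is essentially the paper's own proof: both use \equref{AS} to replace $\lie$ by $-\lie\circ\lambda$, then \equref{compatibility} to turn $\lambda\circ(\lie\ot id_L)$ into $(id_L\ot\lie)\circ t_\lambda\circ a_{L,L,L}$, cancel the associators, invoke the invariance of $id+t_\lambda+w_\lambda$ under $t_\lambda$, and finish with \equref{Jac}. The only cosmetic difference is that you isolate the identity $\lie\circ(\lie\ot id_L)\circ a^{-1}_{L,L,L}=-\lie\circ(id_L\ot\lie)\circ t_\lambda$ first, whereas the paper runs the same steps in a single chain of equalities.
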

\begin{proof}
Using \equref{AS} in the first equality, \equref{compatibility} in the second equality,  $w_\lambda=t_\lambda^2=t_\lambda^{-1}$ in the third equality and \equref{Jac} in the last equality we find
\begin{eqnarray*}
&&\lie\circ (\lie\ot id_{L})\circ a^{-1}_{L,L,L}\circ (id_{L\ot(L\ot L)}+ t_\lambda+ w_\lambda)\\
&=&-\lie\circ\lambda\circ (\lie\ot id_{L})\circ a^{-1}_{L,L,L}\circ(id_{L\ot(L\ot L)}+ t_\lambda+ w_\lambda)\\
&=&-\lie\circ (id_{L}\ot \lie)\circ t_\lambda\circ a_{L,L,L}\circ a^{-1}_{L,L,L}\circ (id_{L\ot(L\ot L)}+ t_\lambda+ w_\lambda)\\
&=&-\lie\circ (id_{L}\ot \lie)\circ (id_{L\ot(L\ot L)}+ t_\lambda+ w_\lambda)=0
\end{eqnarray*}
\end{proof}
\begin{example}
\exlabel{monad}
Let $\Cc$ be any additive category, and consider the functor category $\End(\Cc)$ of endofunctors on $\Cc$ and natural transformations between them. Recall that this is a monoidal category with the composition of functors as tensor product on objects and the Godement product as tensor product on morphisms. 
Moreover, $\End(\Cc)$ inherits the additivity of $\Cc$.
We will call a YB-Lie algebra in $\End(\Cc)$ a {\em Lie monad} on $\Cc$. 
\end{example}

\begin{example}
Let $(B,\mu_B)$ be an associative algebra in an additive, monoidal category $\Cc$ and suppose there is a self-invertible Yang-Baxter operator $\lambda:B\ot B\to B\ot B$ on $B$, such that the conditions hold:
\begin{eqnarray}\eqlabel{assalgebra}
(B\ot \mu_B)\circ a^{-1}_{B,B,B}\circ w_\lambda &=& \lambda\circ (\mu_B\ot B)\\
(\mu_B \ot B)\circ t_\lambda\circ a_{B,B,B} &=& \lambda \circ (B\ot \mu_B)\nonumber
\end{eqnarray}
Then we can consider a YB-Lie algebra structure on $B$, induced by the commutator bracket $\lie_{B}$ (defined by $\lie_{B}=\mu_{B}\circ(B\ot B-\lambda)$). E.g. If $B$ is a braided Hopf algebra (or a braided bialgebra) in the sense of Takeuchi (see \cite{Tak}) then $B$ admits a Yang-Baxter operator $\lambda$ that satisfies the diagrams \equref{assalgebra}. If $\lambda$ is self-invertible, the commutator algebra of $B$ is a YB-Lie-algebra in our sense. Moreover, the primitive elements of $B$ can be defined as the equaliser $(P(B),eq)$ in the following diagram
$$\xymatrix{P(B)\ar[rr]^{eq} && B \ar@<.5ex>[rr]^-{\Delta}  \ar@<-.5ex>[rr]_-{\eta\ot B+B\ot \eta} &&B\ot B}$$
where $\Delta:B\to B\ot B$ is the comultiplication on $B$ and $\eta:k\to B$ is the unit of $B$.
One can show (see forthcoming \cite{GV}) that $P(B)$ is again a YB-Lie algebra.
\end{example}

\section{Functorial properties}

In this section we study functors that send Lie algebras to Lie algebras. 

Let $\Cc=(C,\ot,I,a,l,r)$ and $\Dd=(D,\odot,J,a',l',r')$ be two additive, monoidal categories. For simplicity, we will suppose that $\Cc$ and $\Dd$ are strict monoidal, that is $a,l,r$ and $a',l',r'$ are identity natural transformations and will be omitted. By Mac Lane's coherence theorem, this puts no restrictions on the subsequent results.

\begin{Definition}
A functor $F:\Cc\to \Dd$ will be called a {\em non-unital monoidal} functor, if there exists a natural transformation $\Psi_{X,Y}:FX\odot FY\to F(X\ot Y)$ that satisfies the following condition
\begin{eqnarray}
\Psi_{X\ot Y,Z}\circ (\Psi_{X,Y}\odot FZ) = \Psi_{X,Y\ot Z}\circ (FX\odot \Psi_{Y,Z}).\eqlabel{monoidality}
\end{eqnarray} 
\end{Definition}

\begin{lemma}\lelabel{lemma1}
Let $(F,\Psi):\Cc\to \Dd$ be a non-unital monoidal functor and use notation as above.
Let $\lambda:L\ot L\to L\ot L$ be a self-invertible Yang-Baxter operator on $L\in\Cc$. Suppose that there exists a morphism $\lambda':FL\ot FL\to FL\ot FL$ such that $\Psi_{L,L}\circ \lambda'=F(\lambda)\circ \Psi_{L,L}$. If $\Psi_{L\ot L,L}$, $\Psi_{L,L}\odot FL$ and $\Psi_{L,L}$ are monomorphisms (e.g. $\Psi$ is a natural monomorphism and the endofunctor $-\odot FL$ preserves monos), then $\lambda'$ is a self invertible Yang-Baxter operator on $FL$.
\end{lemma}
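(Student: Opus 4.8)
The goal is to verify that $\lambda'$ satisfies \equref{zelfinvers} and \equref{YB}. The strategy is to ``transport'' each identity satisfied by $\lambda$ along the natural transformation $\Psi$, using the defining relation $\Psi_{L,L}\circ\lambda' = F(\lambda)\circ\Psi_{L,L}$ together with the monoidality condition \equref{monoidality} and functoriality of $F$, and then cancel the relevant $\Psi$-component on the left using the monomorphism hypotheses. Since we are in the strict setting, all associativity constraints disappear, which simplifies the bookkeeping considerably.

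First I would handle self-invertibility. Compute $\Psi_{L,L}\circ\lambda'\circ\lambda' = F(\lambda)\circ\Psi_{L,L}\circ\lambda' = F(\lambda)\circ F(\lambda)\circ\Psi_{L,L} = F(\lambda\circ\lambda)\circ\Psi_{L,L} = F(id_{L\ot L})\circ\Psi_{L,L} = \Psi_{L,L}$, using the defining relation twice, functoriality of $F$, and \equref{zelfinvers} for $\lambda$. Since $\Psi_{L,L}$ is a monomorphism, this yields $\lambda'\circ\lambda' = id_{FL\odot FL}$.

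Next, for the Yang-Baxter equation \equref{YB} (which in the strict case reads $(\lambda'\odot FL)\circ(FL\odot\lambda')\circ(\lambda'\odot FL) = (FL\odot\lambda')\circ(\lambda'\odot FL)\circ(FL\odot\lambda')$), the plan is to precompose both sides with the triple component $\Psi_{L\ot L,L}\circ(\Psi_{L,L}\odot FL)$, which by \equref{monoidality} equals $\Psi_{L,L\ot L}\circ(FL\odot\Psi_{L,L})$, and show both sides become equal to $F\big((\lambda\odot L)\circ(L\odot\lambda)\circ(\lambda\odot L)\big)$ composed with this triple component. The key sub-steps are two ``intertwining'' identities one pushes through repeatedly: from $\Psi_{L,L}\circ\lambda' = F(\lambda)\circ\Psi_{L,L}$ and naturality of $\Psi$ in each variable, one gets $\Psi_{L\ot L, L}\circ(\lambda'\odot FL) = F(\lambda\ot L)\circ\Psi_{L\ot L,L}$ and, after rewriting via \equref{monoidality}, $\Psi_{L,L\ot L}\circ(FL\odot\lambda') = F(L\ot\lambda)\circ\Psi_{L,L\ot L}$; iterating these lets the $\Psi$-components migrate to the left past each tensor factor of $\lambda'$, turning the composite of three $\lambda'$-terms into $F$ applied to the composite of three $\lambda$-terms, times $\Psi_{L\ot L,L}\circ(\Psi_{L,L}\odot FL)$. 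Applying the Yang-Baxter equation \equref{YB} for $\lambda$ inside $F(-)$ (again associativity is trivial), and running the same migration in the other order on the right-hand side, shows the two precomposed expressions agree. Finally, since $\Psi_{L\ot L,L}$ and $\Psi_{L,L}\odot FL$ are monomorphisms, their composite is a monomorphism, so it can be cancelled, giving \equref{YB} for $\lambda'$.

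The main obstacle is purely organizational: carefully chasing which component of $\Psi$ (namely $\Psi_{L,L}$, $\Psi_{L\ot L,L}$, or $\Psi_{L,L\ot L}$) is needed at each stage to intertwine a given tensor factor of $\lambda'$ with $F(\lambda)$, and invoking \equref{monoidality} at exactly the right moment to match the two bracketings $\Psi_{L\ot L,L}\circ(\Psi_{L,L}\odot FL)$ and $\Psi_{L,L\ot L}\circ(FL\odot\Psi_{L,L})$ so that both sides of the Yang-Baxter equation for $\lambda'$ get precomposed with the \emph{same} monomorphism before cancellation. There is no conceptual difficulty beyond this diagram chase; the hypotheses on $\Psi$ being (componentwise) monic are used only at the very end to perform the two cancellations, and the parenthetical sufficient condition ($\Psi$ a natural monomorphism with $-\odot FL$ preserving monos) is exactly what guarantees all three required components are monic.
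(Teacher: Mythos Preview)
Your proposal is correct and follows essentially the same route as the paper: compose on the left with $\Psi_{L\ot L,L}\circ(\Psi_{L,L}\odot FL)$, migrate past each tensor factor of $\lambda'$ using the defining relation, naturality of $\Psi$, and \equref{monoidality}, apply the Yang--Baxter identity for $\lambda$ under $F$, and cancel the monomorphism (the paper leaves self-invertibility to the reader, which you spell out). One small slip to fix: your stated intertwining identity $\Psi_{L\ot L,L}\circ(\lambda'\odot FL)=F(\lambda\ot L)\circ\Psi_{L\ot L,L}$ is ill-typed (the domain of $\Psi_{L\ot L,L}$ is $F(L\ot L)\odot FL$, not $(FL\odot FL)\odot FL$); the correct step, as in the paper, is $\Psi_{L\ot L,L}\circ(\Psi_{L,L}\odot FL)\circ(\lambda'\odot FL)=\Psi_{L\ot L,L}\circ(F(\lambda)\odot FL)\circ(\Psi_{L,L}\odot FL)=F(\lambda\ot L)\circ\Psi_{L\ot L,L}\circ(\Psi_{L,L}\odot FL)$.
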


\begin{proof}
Using the compatibility between $\lambda$ and $\lambda'$ in the first equality, the naturality of $\Psi$ in the second equality, \equref{monoidality} in the third equality, a repetition of the above arguments in the fourth equality, the Yang-Baxter identity for $\lambda$ in the fifth equality, and a reverse computation in the last equality, we find
\begin{eqnarray*}
&&\Psi_{L\ot L,L}\circ(\Psi_{L,L}\odot FL)\circ(\lambda'\odot FL)\circ (FL\odot \lambda')\circ (\lambda'\odot FL)\\
&=&\Psi_{L\ot L,L}\circ (F(\lambda)\odot FL)\circ (\Psi_{L,L}\odot FL)\circ (FL\odot \lambda')\circ (\lambda'\odot FL)\\
&=&F(\lambda\ot L)\circ \Psi_{L\ot L,L}\circ (\Psi_{L,L}\odot FL)\circ (FL\odot \lambda')\circ (\lambda'\odot FL)\\
&=&F(\lambda\ot L)\circ \Psi_{L,L\ot L}\circ (FL\odot \Psi_{L,L})\circ (FL\odot \lambda')\circ (\lambda'\odot FL)\\
&=&F(\lambda\ot L)\circ F(L\ot \lambda)\circ F(\lambda\ot L)\circ \Psi_{L,L\ot L}\circ (FL\odot \Psi_{L,L})\\
&=&F(L\ot \lambda)\circ F(\lambda\ot L)\circ F(L\ot \lambda)\circ\Psi_{L,L\ot L}\circ (FL\odot \Psi_{L,L})\\
&=&\Psi_{L\ot L,L}\circ(\Psi_{L,L}\odot FL)\circ (FL\odot \lambda')\circ(\lambda'\odot FL)\circ (FL\odot \lambda')
\end{eqnarray*}
As $\Psi_{L\ot L,L}$ and $\Psi_{L,L}\odot FL$ are monomorphisms, we conclude from the computation above that $\lambda'$ satisfies the Yang-Baxter identity. In a similar way, one proofs that $\lambda'$ is self-invertible.
\end{proof}

\begin{lemma}\lelabel{lemma2}
Let $(F,\Psi):\Cc\to \Dd$ be a non-unital monoidal functor and use notation as above.
Let $\lambda:L\ot L\to L\ot L$ and  $\lambda':FL\ot FL\to FL\ot FL$ be $\Cc$ (resp.\ $\Dd$)-morphisms such that $\Psi_{L,L}\circ \lambda'=F(\lambda)\circ \Psi_{L,L}$. 
Then the following identities hold
\begin{eqnarray}
\Psi_{L\ot L,L}\circ (\Psi_{L,L}\odot FL)\circ t_{\lambda'} &=& F(t_\lambda)\circ \Psi_{L,L\ot L} \circ (FL\odot \Psi_{L,L})\eqlabel{lemma1}\\ 
\Psi_{L\ot L,L}\circ (\Psi_{L,L}\odot FL)\circ w_{\lambda'} &=& F(w_\lambda)\circ \Psi_{L,L\ot L} \circ (FL\odot \Psi_{L,L}) \eqlabel{lemma2}
\end{eqnarray}
\end{lemma}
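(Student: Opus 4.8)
The plan is to mimic the proof of \leref{lemma1}: everything will be deduced from the compatibility $\Psi_{L,L}\circ\lambda'=F(\lambda)\circ\Psi_{L,L}$, the naturality of $\Psi$, the bifunctoriality of $\odot$, and the monoidality condition \equref{monoidality}. Since $\Cc$ and $\Dd$ are assumed strict, I would first rewrite $t_\lambda=(\lambda\ot L)\circ(L\ot\lambda)$, $w_\lambda=(L\ot\lambda)\circ(\lambda\ot L)$ and likewise $t_{\lambda'}=(\lambda'\odot FL)\circ(FL\odot\lambda')$, $w_{\lambda'}=(FL\odot\lambda')\circ(\lambda'\odot FL)$, and introduce the abbreviation
\[
\Phi:=\Psi_{L\ot L,L}\circ(\Psi_{L,L}\odot FL)=\Psi_{L,L\ot L}\circ(FL\odot\Psi_{L,L}):FL\odot FL\odot FL\to F(L\ot L\ot L),
\]
the two displayed expressions being equal by \equref{monoidality}. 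With this notation, \equref{lemma1} and \equref{lemma2} are precisely the statements $\Phi\circ t_{\lambda'}=F(t_\lambda)\circ\Phi$ and $\Phi\circ w_{\lambda'}=F(w_\lambda)\circ\Phi$.

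The heart of the argument is to establish the two \emph{atomic} intertwining relations
\[
\Phi\circ(\lambda'\odot FL)=F(\lambda\ot L)\circ\Phi,\qquad \Phi\circ(FL\odot\lambda')=F(L\ot\lambda)\circ\Phi .
\]
For the first one I would use the left-bracketed form of $\Phi$: by bifunctoriality of $\odot$ one has $(\Psi_{L,L}\odot FL)\circ(\lambda'\odot FL)=(\Psi_{L,L}\circ\lambda')\odot FL$, which by the compatibility equals $(F(\lambda)\circ\Psi_{L,L})\odot FL=(F(\lambda)\odot FL)\circ(\Psi_{L,L}\odot FL)$, and then naturality of $\Psi$ turns $\Psi_{L\ot L,L}\circ(F(\lambda)\odot FL)$ into $F(\lambda\ot L)\circ\Psi_{L\ot L,L}$. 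For the second relation one argues symmetrically, this time using the right-bracketed form of $\Phi$ together with $(FL\odot\Psi_{L,L})\circ(FL\odot\lambda')=FL\odot(\Psi_{L,L}\circ\lambda')$; these are exactly (halves of) the chain of equalities in the proof of \leref{lemma1}.

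Finally I would assemble the result: using functoriality of $F$,
\[
\Phi\circ t_{\lambda'}=\Phi\circ(\lambda'\odot FL)\circ(FL\odot\lambda')=F(\lambda\ot L)\circ\Phi\circ(FL\odot\lambda')=F(\lambda\ot L)\circ F(L\ot\lambda)\circ\Phi=F(t_\lambda)\circ\Phi,
\]
which is \equref{lemma1}, and composing the two atomic relations in the opposite order gives $\Phi\circ w_{\lambda'}=F(L\ot\lambda)\circ F(\lambda\ot L)\circ\Phi=F(w_\lambda)\circ\Phi$, which is \equref{lemma2}. I expect the only delicate point to be the bookkeeping of the two bracketings of the iterated $\Psi$: the first atomic relation naturally calls for the form $\Psi_{L\ot L,L}\circ(\Psi_{L,L}\odot FL)$ while the second calls for $\Psi_{L,L\ot L}\circ(FL\odot\Psi_{L,L})$, and it is exactly the monoidality axiom \equref{monoidality} that allows one to pass between them; in the non-strict setting one would additionally have to insert the associativity constraints, but by Mac Lane's coherence theorem this changes nothing, as already noted above.
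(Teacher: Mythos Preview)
Your proposal is correct and follows essentially the same route as the paper: the paper writes out the chain of equalities for \equref{lemma1} directly (expanding $t_{\lambda'}$, applying the compatibility $\Psi_{L,L}\circ\lambda'=F(\lambda)\circ\Psi_{L,L}$ and naturality of $\Psi$ on the left factor, invoking \equref{monoidality} to rebracket, then repeating compatibility and naturality on the right factor) and declares \equref{lemma2} ``completely similar''. Your modular packaging via the two atomic intertwining relations and the abbreviation $\Phi$ is a cosmetic reorganisation of exactly these steps.
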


\begin{proof}
Let us proof equation \equref{lemma1}, the proof of \equref{lemma2} is completely similar.
\begin{eqnarray*}
\Psi_{L\ot L,L}\circ (\Psi_{L,L}\odot FL)\circ t_{\lambda'} &=&
\Psi_{L\ot L,L}\circ (\Psi_{L,L}\odot FL)\circ(\lambda'\odot FL)\circ (FL\odot \lambda')\\&=&
\Psi_{L\ot L,L}\circ (F(\lambda)\odot FL)\circ(\Psi_{L,L}\odot FL)\circ (FL\odot \lambda')\\&=&
F(\lambda\ot L)\circ \Psi_{L\ot L,L}\circ(\Psi_{L,L}\odot FL)\circ (FL\odot \lambda')\\&=&
F(\lambda\ot L)\circ \Psi_{L,L\ot L}\circ(FL\odot \Psi_{L,L})\circ (FL\odot \lambda')\\&=&
F(\lambda\ot L)\circ \Psi_{L,L\ot L}\circ(FL\odot F(\lambda))\circ (FL\odot \Psi_{L,L})\\&=&
F(\lambda\ot L)\circ F(L\ot \lambda)\circ \Psi_{L,L\ot L} \circ (FL\odot \Psi_{L,L})\\&=&
F(t_\lambda)\circ \Psi_{L,L\ot L} \circ (FL\odot \Psi_{L,L})
\end{eqnarray*}
We used the compatibility between $\lambda$ and $\lambda'$ in the second and fifth equality, the naturality of $\Psi$ in the third and sixth equality and \equref{monoidality} in the fourth.
\end{proof}

Remark that the existence of the morphism $\lambda'$ as in the above lemmata is guaranteed if $F$ is a strong monoidal functor, as in this situation $\Psi$ is invertible. Furthermore, if $\Cc$ and $\Dd$ are symmetric monoidal and we take $\lambda$ and $\lambda'$ induced by the symmetry of $\Cc$ and $\Dd$ respectively, then the compatibility condition between $\lambda$ and $\lambda'$ is automatically satisfied. 

\begin{theorem}\thlabel{functorial}
Let $(F,\Psi):\Cc\to \Dd$ be an additive non-unital monoidal functor and $(L,\lambda,\lie)$ a YB-Lie algebra in $\Cc$. Suppose that there exists a self-invertible Yang-Baxter operator $\lambda':FL\ot FL\to FL\ot FL$ such that $\Psi_{L,L}\circ \lambda'=F(\lambda)\circ \Psi_{L,L}$.
Then $(FL,\lambda',\lie')$ is a YB-Lie algebra in $\Dd$ with Lie-bracket given by
$$\xymatrix{\lie':FL\odot FL \ar[rr]^-{\Psi_{L,L}} && F(L\ot L) \ar[rr]^-{F(\lie)} &&FL}.$$
\end{theorem}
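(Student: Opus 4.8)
The plan is to verify, one at a time, the three defining conditions of a YB-Lie algebra for the triple $(FL,\lambda',\lie')$: anti-symmetry \equref{AS}, the right $\lambda'$-Jacobi identity \equref{Jac}, and the compatibility \equref{compatibility} between $\lie'$ and $\lambda'$. That $\lambda'$ is a self-invertible Yang--Baxter operator on $FL$ is granted by hypothesis (it would otherwise follow from \leref{lemma1} under monomorphism assumptions on $\Psi$), so nothing is needed there. In all three verifications the mechanism is the same: one rewrites the left-hand side of the identity to be checked in the form $F(\phi)\circ\Phi$, where $\phi$ is a morphism of $\Cc$ and $\Phi$ is a comparison morphism built from $\Psi$; then one applies the corresponding identity for $L$ to see that $\phi=0$; then $F(0)\circ\Phi=0$. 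The tools are the naturality of $\Psi$, the monoidality equation \equref{monoidality}, \leref{lemma2}, the hypothesis $\Psi_{L,L}\circ\lambda'=F(\lambda)\circ\Psi_{L,L}$, and --- essentially --- the additivity of $F$, which is what allows sums of parallel morphisms to be moved across $F$.

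For anti-symmetry I will expand $\lie'\circ(id_{FL\odot FL}+\lambda')=F(\lie)\circ\Psi_{L,L}+F(\lie)\circ\Psi_{L,L}\circ\lambda'$, rewrite the second summand as $F(\lie)\circ F(\lambda)\circ\Psi_{L,L}$ using the hypothesis on $\lambda'$, and use additivity of $F$ to recognise the sum as $F\big(\lie\circ(id_{L\ot L}+\lambda)\big)\circ\Psi_{L,L}=F(0_{L\ot L,L})\circ\Psi_{L,L}=0$, the vanishing being \equref{AS} for $L$.

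For the $\lambda'$-Jacobi identity --- the main computation --- I will first use $\lie'=F(\lie)\circ\Psi_{L,L}$, the naturality of $\Psi$ and \equref{monoidality} to obtain
\[
\lie'\circ(id_{FL}\odot\lie')=F\big(\lie\circ(id_L\ot\lie)\big)\circ\Phi,\qquad\Phi:=\Psi_{L,L\ot L}\circ(FL\odot\Psi_{L,L})=\Psi_{L\ot L,L}\circ(\Psi_{L,L}\odot FL),
\]
the two descriptions of $\Phi$ coinciding by \equref{monoidality}. Equations \equref{lemma1} and \equref{lemma2} of \leref{lemma2} say exactly that $\Phi\circ t_{\lambda'}=F(t_\lambda)\circ\Phi$ and $\Phi\circ w_{\lambda'}=F(w_\lambda)\circ\Phi$, so composing the displayed identity with $id_{FL\odot(FL\odot FL)}+t_{\lambda'}+w_{\lambda'}$, expanding, and collecting the three terms inside $F$ by additivity, the left-hand side of \equref{Jac} for $FL$ becomes
\[
F\Big(\lie\circ(id_L\ot\lie)\circ\big(id_{L\ot(L\ot L)}+t_\lambda+w_\lambda\big)\Big)\circ\Phi,
\]
which is $0$ by \equref{Jac} for $L$.

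The compatibility \equref{compatibility} is where I expect the \emph{main obstacle}. I will post-compose both sides of the identity to be proved (in the strict setting, $(id_{FL}\odot\lie')\circ t_{\lambda'}=\lambda'\circ(\lie'\odot id_{FL})$) with $\Psi_{L,L}$. By the naturality of $\Psi$, \equref{monoidality} and \equref{lemma1}, the left-hand side becomes $F\big((id_L\ot\lie)\circ t_\lambda\big)\circ\Phi$; by the hypothesis $\Psi_{L,L}\circ\lambda'=F(\lambda)\circ\Psi_{L,L}$ together with the naturality of $\Psi$, the right-hand side becomes $F\big(\lambda\circ(\lie\ot id_L)\big)\circ\Phi$; and these agree because \equref{compatibility} for $L$ reads $(id_L\ot\lie)\circ t_\lambda=\lambda\circ(\lie\ot id_L)$ in the strict case. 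What then remains is to cancel the leading $\Psi_{L,L}$, which is legitimate precisely when $\Psi_{L,L}$ is a monomorphism --- automatic when $F$ is strong monoidal (so $\Psi$ is invertible) or when $\Cc,\Dd$ are symmetric with $\lambda,\lambda'$ induced by their symmetries, in line with the monomorphism hypotheses of \leref{lemma1}. In those settings the cancellation goes through and the proof is complete.
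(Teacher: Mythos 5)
Your verifications of anti-symmetry and of the $\lambda'$-Jacobi identity are correct and follow essentially the same route as the paper's own proof: expand $\lie'=F(\lie)\circ\Psi_{L,L}$, push everything inside $F$ using naturality of $\Psi$, \equref{monoidality} and \leref{lemma2}, invoke additivity of $F$ to collect the sum, and apply \equref{AS}, resp.\ \equref{Jac}, for $L$. The one genuinely different feature of your write-up is that you also attempt the third axiom, \equref{compatibility}, which the paper's proof silently omits even though it is part of the definition of a YB-Lie algebra. Your treatment of it is the natural one, and your diagnosis is accurate: post-composing with $\Psi_{L,L}$ and using naturality, \equref{lemma1} and the hypothesis $\Psi_{L,L}\circ\lambda'=F(\lambda)\circ\Psi_{L,L}$ yields
$$\Psi_{L,L}\circ (id_{FL}\odot \lie')\circ t_{\lambda'}=F\big((id_L\ot\lie)\circ t_\lambda\big)\circ\Phi=F\big(\lambda\circ(\lie\ot id_L)\big)\circ\Phi=\Psi_{L,L}\circ\lambda'\circ(\lie'\odot id_{FL}),$$
and one can only strip off the leading $\Psi_{L,L}$ when it is a monomorphism. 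That extra hypothesis is \emph{not} among those of \thref{functorial} as stated, so under the bare hypotheses your argument (and, implicitly, the theorem) only establishes \equref{compatibility} for $FL$ after composition with $\Psi_{L,L}$. In the situations the paper actually uses --- $F$ strong monoidal (\coref{YB}), the symmetric case where \equref{compatibility} is automatic by naturality of the symmetry (\coref{symmetric}), or $\Psi$ a natural monomorphism as in the multiplier-algebra example --- your cancellation is legitimate and the proof is complete. So your proposal is at least as complete as the paper's proof; you have correctly located the one hypothesis that the statement needs but does not list.
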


\begin{proof}
Let us check that $\lie'$ is antisymmetric. Using the antisymmetry of $(L,\lie)$ and compatibility between $\lambda$ and $\lambda'$ we obtain
\begin{eqnarray*}
\lie'\circ \lambda'&=& F(\lie)\circ \Psi_{L,L}\circ \lambda'=F(\lie)\circ F(\lambda)\circ \Psi_{L,L}\\
&=&F(\lie\circ \lambda)\circ \Psi_{L,L}=-F(\lie)\circ \Psi_{L,L}=-\lie'.
\end{eqnarray*}
Next, let us check the Jacobi identity
\begin{eqnarray*}
&&\lie'\circ (id_{FL}\odot \lie')\circ (id_{FL\odot(FL\odot FL)}+ t_{\lambda'}+w_{\lambda'})\\
&=&F(\lie)\circ\Psi_{L,L}\circ(id_{FL}\odot F(\lie))\circ (id_{FL}\odot \Psi_{L,L})\circ(id+ t_{\lambda'}+w_{\lambda'})\\
&=&F(\lie)\circ F(id_L\odot \lie)\circ \Psi_{L,L\ot L}\circ (id_{FL}\odot \Psi_{L,L})\circ(id+ t_{\lambda'}+w_{\lambda'})\\
&=&F(\lie)\circ F(id_L\odot \lie)\circ F(id_{L\ot L\ot L} + t_{\lambda} + w_{\lambda})\circ \Psi_{L\ot L,L}\circ (\Psi_{L,L}\odot id_{FL})\\
&=&0
\end{eqnarray*}
We used the naturality of $\Psi$ in the second equality and \leref{lemma2} in the third equation and \equref{Jac} in the last equality.
\end{proof}

Combining \thref{functorial} with \leref{lemma1}, we immediately obtain the following two satisfying corollaries, which allow to apply \thref{functorial} in practical situations.

\begin{corollary}\colabel{symmetric}
Let $(F,\Psi_0,\Psi):\Cc\to\Dd$ be an additive symmetric monoidal functor between additive symmetric monoidal categories. If $(L,\lie)$ is a Lie algebra in $\Cc$, then $(FL,\Psi_{L,L}\circ\lie)$ is a Lie algebra in $\Dd$.
\end{corollary}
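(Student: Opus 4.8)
The plan is to deduce Corollary~\ref{co:symmetric} as a special case of \thref{functorial}, the only work being to check that the hypotheses of the theorem are met and that the conclusion specializes correctly. First I would recall that a symmetric monoidal functor $(F,\Psi_0,\Psi)$ is in particular strong monoidal, so $\Psi_{X,Y}:FX\odot FY\to F(X\ot Y)$ is a natural isomorphism; in particular it satisfies \equref{monoidality}, hence $(F,\Psi)$ is a non-unital monoidal functor in the sense required, and it is additive by assumption. Next, a Lie algebra $(L,\lie)$ in the symmetric category $\Cc$ is the same as a YB-Lie algebra $(L,c_{L,L},\lie)$: by \exref{symmetric} the symmetry $c_{L,L}$ is a self-invertible Yang-Baxter operator, the antisymmetry and Jacobi axioms \equref{AS}--\equref{Jac} coincide with those in the Definition of a Lie algebra (using that $t_{c_{L,L}}$ and $w_{c_{L,L}}$ agree with the $t,w$ appearing there), and the compatibility \equref{compatibility} holds automatically by naturality of $c$, as remarked after that Definition.

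Then I would produce the required operator $\lambda'$ on $FL$. The natural candidate is $\lambda':=c'_{FL,FL}$, the symmetry of $\Dd$; by \exref{symmetric} again it is a self-invertible Yang-Baxter operator on $FL$. The compatibility $\Psi_{L,L}\circ\lambda'=F(c_{L,L})\circ\Psi_{L,L}$ is precisely the coherence axiom for a symmetric monoidal functor, namely that $\Psi$ intertwines the symmetries $c$ and $c'$; this is the one genuine input beyond formal nonsense, and it is exactly the condition recorded in the remark preceding \thref{functorial}. (Alternatively one can invoke \leref{lemma1}, since $\Psi$ is a natural isomorphism and hence a monomorphism preserved by $-\odot FL$, to see directly that $c'_{FL,FL}$ is a Yang-Baxter operator; but the clean route is to cite the symmetric-functor axiom.) With all hypotheses of \thref{functorial} verified, we conclude that $(FL,c'_{FL,FL},\lie')$ is a YB-Lie algebra in $\Dd$, where $\lie'=F(\lie)\circ\Psi_{L,L}=\Psi_{L,L}\circ\lie$ in the multiplicative notation used in the statement.

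Finally I would observe that a YB-Lie algebra in $\Dd$ whose Yang-Baxter operator is the ambient symmetry $c'$ is the same thing as a Lie algebra in $\Dd$: the axioms \equref{AS} and \equref{Jac} for $\lambda'=c'_{FL,FL}$ unwind, via $t_{c'}=c'_{FL\odot FL,FL}\circ a'^{-1}$ and $w_{c'}=a'\circ c'_{FL,FL\odot FL}$, to the antisymmetry and Jacobi identities of the Definition of a Lie algebra. Hence $(FL,\lie')$ is a Lie algebra in $\Dd$, which is the assertion. I do not expect a serious obstacle here: the proof is pure bookkeeping, translating between the "Lie algebra" and "YB-Lie algebra with symmetric $\lambda$" formulations and citing \thref{functorial}. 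The only point that requires a moment's care is the compatibility hypothesis $\Psi_{L,L}\circ\lambda'=F(\lambda)\circ\Psi_{L,L}$, which must be recognized as (an instance of) the symmetry-compatibility axiom built into the notion of a symmetric monoidal functor rather than something needing a separate argument.
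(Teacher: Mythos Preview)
Your approach is essentially the paper's: deduce the corollary from \thref{functorial} by taking $\lambda=c_{L,L}$ and $\lambda'=c'_{FL,FL}$, with the compatibility $\Psi_{L,L}\circ c'_{FL,FL}=F(c_{L,L})\circ\Psi_{L,L}$ supplied by the very axiom that makes $F$ a \emph{symmetric} monoidal functor, and with \exref{symmetric} guaranteeing that $c'_{FL,FL}$ is already a self-invertible Yang--Baxter operator.

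One slip to fix: a symmetric monoidal functor is \emph{not} in general strong monoidal; ``symmetric'' only means that $\Psi$ intertwines the two symmetries, not that $\Psi$ is invertible. This does not damage your main argument, since nowhere in the clean route do you actually use invertibility of $\Psi$ --- you only need that $(F,\Psi)$ is (non-unital) monoidal, the symmetry compatibility, and that $c'_{FL,FL}$ is a self-invertible Yang--Baxter operator, all of which hold for a lax symmetric monoidal functor. It does, however, invalidate your parenthetical ``alternative'' via \leref{lemma1}, since there you invoke that $\Psi$ is a natural isomorphism; drop that aside or replace the justification. With that correction the proof is fine and matches the paper's derivation.
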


\begin{corollary}\colabel{YB}
Let $(F,\Psi):\Cc\to \Dd$ be an additive (non-unital) strong monoidal functor between additive monoidal categories. If $(L,\lambda,\lie)$ is a YB-Lie algebra in $\Cc$, then $(FL,\Psi_{L,L}^{-1}\circ F(\lambda)\circ \Psi_{L,L},F(\lie)\circ\Psi_{L,L})$ is a YB-lie algebra in $\Dd$.
\end{corollary}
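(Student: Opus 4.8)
The plan is to deduce the statement from \thref{functorial} and \leref{lemma1}, the point being that for a strong monoidal functor the structure morphism $\Psi$ is invertible, so the various monomorphism hypotheses appearing in those results become automatic and a canonical candidate for $\lambda'$ presents itself.

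First I would record the consequences of strength: since $(F,\Psi)$ is strong monoidal, $\Psi$ is a natural isomorphism, hence $\Psi_{L,L}$ and $\Psi_{L\ot L,L}$ are isomorphisms, and $\Psi_{L,L}\odot FL$ is an isomorphism as well (whiskering an isomorphism by the endofunctor $-\odot FL$ yields an isomorphism). In particular all three are monomorphisms, so the part of the hypothesis of \leref{lemma1} referring to $\Psi$ is satisfied. Because $\Psi_{L,L}$ is invertible, the morphism
$$\lambda':=\Psi_{L,L}^{-1}\circ F(\lambda)\circ\Psi_{L,L}\colon FL\odot FL\to FL\odot FL$$
is well defined, and by construction $\Psi_{L,L}\circ\lambda'=F(\lambda)\circ\Psi_{L,L}$, which is exactly the compatibility between $\lambda$ and $\lambda'$ demanded in \leref{lemma1} and in \thref{functorial}.

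With this $\lambda'$ in hand, I would invoke \leref{lemma1} to conclude that $\lambda'$ is a self-invertible Yang-Baxter operator on $FL$. Then I would apply \thref{functorial} to the YB-Lie algebra $(L,\lambda,\lie)$, the additive non-unital monoidal functor $(F,\Psi)$, and this operator $\lambda'$: every hypothesis is now in place, so its conclusion yields that $(FL,\lambda',F(\lie)\circ\Psi_{L,L})$ is a YB-Lie algebra in $\Dd$. Since this triple is literally $(FL,\Psi_{L,L}^{-1}\circ F(\lambda)\circ\Psi_{L,L},F(\lie)\circ\Psi_{L,L})$, this is the assertion.

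I do not expect a genuine obstacle; the one item worth a second glance is the compatibility axiom \equref{compatibility} for $(FL,\lambda',\lie')$, since the proof of \thref{functorial} as written verifies antisymmetry \equref{AS} and the Jacobi identity \equref{Jac} explicitly but not \equref{compatibility}. This is handled by the same mechanism used there for the Jacobi identity: use \leref{lemma2} together with the invertibility of the $\Psi$'s to rewrite $t_{\lambda'}$, push $\lie'=F(\lie)\circ\Psi_{L,L}$ through using the naturality of $\Psi$ and the hexagon-type relation \equref{monoidality}, apply \equref{compatibility} for $L$ inside $F$, and reverse the computation; no idea beyond those already present in the paper is required.
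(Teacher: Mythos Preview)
Your proposal is correct and follows exactly the approach the paper intends: the corollary is stated there as an immediate consequence of combining \thref{functorial} with \leref{lemma1}, and you spell out precisely why strong monoidality makes their hypotheses automatic and produces the canonical $\lambda'$. Your observation that the proof of \thref{functorial} as written does not explicitly verify axiom \equref{compatibility} is a fair point, and your sketch for handling it via \leref{lemma2}, naturality of $\Psi$, and \equref{monoidality} is sound.
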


\begin{example}
Let us return to the case of Lie superalgebras, which are exactly Lie algebras in $\Vect^{\ZZ_{2}}(k)$ (cf. \exref{SuperLie}). It is well-known that this category is equivalent (even isomorphic) to the category $\Mm^{k[\ZZ_2]}$ of comodules over the groupalgebra $k[\ZZ_2]$, which is in fact a Hopf-algebra. Moreover, this equivalence of categories is an additive, monoidal equivalence, and even a symmetric one, taking into account the coquasitrangular structure on $k[\ZZ_2]$. By our \coref{symmetric}, this implies that Lie algebras can be computed equivalently in $\Vect^{\ZZ_{2}}(k)$ as well as in $\Mm^{k[\ZZ_2]}$.
In fact, Lie algebras in a general monoidal category $\Mm^H$ of comodules over a coquasitriangular bialgebra $H$ have been studied in \cite{FM} and in \cite{bath}, amongst others. Such a Lie algebra is a triple $(M,\rho_{M},[-,-])$ with $M$ a $k$-vectorspace, a coaction $\rho$ on $M$ and a $k$-linear map $[-,-]:M\ot M\to M$ such that $([-,-]\ot id_{H})\circ \rho_{M\ot M}=\rho_{M}\circ[-,-]$, which satisfy the condition \equref{AS}:
\begin{equation}\nonumber
[x\ot y]=-[(y_{[0]}\ot x_{[0]})\sigma(x_{[1]}\ot y_{[1]})]
\end{equation}
and \equref{Jac}:
\begin{align}\nonumber
[x\ot[y\ot z]]+[z_{[0][0]}\ot [x_{[0]}\ot y_{[0]}]]\sigma(y_{[1]}\ot z_{[1]})\sigma(x_{[1]}\ot z_{[0][1]})\nonumber\\
[y_{[0]}\ot [z_{[0]}\ot x_{[0][0]}]]\sigma(x_{[1]}\ot y_{[1]})\sigma(x_{[0][1]}\ot z_{[1]}).\nonumber
\end{align}    
whenever $x,y,z\in M$ and where we used the Sweedler-Heynemann for comodules and $\sigma:H\ot H\to k$ is the convolution invertible bilinear map from the coquasitriangular structure on $H$.
\end{example}

\begin{example}
Let us consider again the Hom-construction. It is proven in \cite[Proposition 1.7]{CG} that the categories $\Hh(\Cc)=
(\Hh(\Cc),\ot, (I,I), a,l,r,c)$ and $\tilde{\Hh}(\Cc)=
(\Hh(\Cc),\ot, (I,I), \tilde{a},\tilde{l},\tilde{r},c)$ are isomorphic as monoidal categories. Let us briefly recall this isomorphism.\\ 
Let $F:\ \Hh(\Cc)\to \widetilde{\Hh}(\Cc)$ be the identity functor, and
$\Psi_0:\ I\to I$ the identity. We define a natural transformation, by putting for all $M,N\in \Hh(\Cc)$, 
$$\Psi_{M,N}=\mu\ot\nu:\ F(M)\ot F(N)=M\ot N\to F(M\ot N)=M\ot N.$$
Then $(F,\Psi_0,\Psi)$ is a strict monoidal functor and it is clearly an isomorphism of categories. Moreover, if $\Cc$ is an additive category, then $F$ is also an additive functor, so $F$ preserves Lie algebras by \coref{symmetric} and YB-Lie algebras by \coref{YB}.\\
Let $((L,\alpha),\lie)$ be a Lie algebra in $\Hh(\Cc)$ 
i.e. $(L,\lie)$ is a Lie algebra in $\Cc$ with a Lie algebra isomorphism $\alpha$. Then 
$(F(L,\alpha),\lie')$ is a Lie algebra in $\widetilde{\Hh}(\Cc)$. The inverse functor is also strict monoidal and additive, hence preserves Lie algebras. Consequently, Hom-Lie algebras, where $\alpha$ is a Lie algebra isomorphism, are nothing else than Lie algebras endowed with a Lie algebra isomorphism.
\end{example}

\begin{example}
Multiplier algebras serve as an important tool to study certain types of non-compact quantum groups, within the framework of multiplier Hopf algebras, see \cite{VD:Multi}. In \cite{JV1} it was proven that the creation of the multiplier algebra of a non-degenerated idempotent (non-unital) $k$-algebra leads to a (symmetric) monoidal (additive) functor $(\MM,\Psi_0,\Psi)$ on the category of these algebras. Hence the multiplier construction preserves Lie algebras by our \thref{functorial}. Moreover, as the monoidal product on the category of non-degenerated idempotent (non-unital) $k$-algebras is given by the monoidal product of underlying $k$-vectorspaces it follows that the multiplier construction also preserves the commutator Lie algebras associated to these algebras. Furthermore, the natural transformation $\Psi$ is a natural monomorphism. Therefore, we can apply \leref{lemma1} and the functor $\MM$ also preserves YB-Lie algebras.
\end{example}

\begin{example}
Let $\Cc$ be an additive monoidal category, and consider the additive monoidal category $\End(\Cc)$ from \exref{monad}.
Consider the functor $\En:\Cc\to \End(\Cc)$, that sends every object $X\in \Cc$ to the endofunctor $-\ot X:\Cc\to \Cc$. Then $\En$ is an additive strong monoidal functor. By \coref{YB}, a YB-Lie algebra in $\Cc$ leads to a YB-Lie algebra in $\End(\Cc)$, i.e.\ to a Lie monad on $\Cc$.

Suppose now that $\Cc$ is a right closed monoidal category, i.e.\ every endofunctor $-\ot X$ has a right adjoint, that we denote by $\H(X,-):\Cc\to \Cc$. Then there exist natural isomorphisms $\pi_{Y,Z}:\Hom(Y,\H(X,Z))\to \Hom(Y\ot X,Z)$. One can proof (see e.g.\ \cite{GV}) that this isomorphism can be extended to an isomorphism
$$\H(X,\H(Y,-))\cong \H(X\ot Y,-)$$
in $\Cc$. Hence the contravariant functor $\H:\Cc\to \End(\Cc)$ that sends an object $X\in \Cc$ to the endofunctor $\H(X,-)$ is a strong monoidal functor. As  consequence, this functor sends a YB-Lie coalgebra in $\Cc$ to a Lie monad on $\Cc$. This idea is further explored in \cite{GV} to study dualities between infinite dimensional Hopf algebras and Lie algebras.
\end{example}

\end{document}